\definecolor{orcidlogocol}{HTML}{A6CE39}
\newtheorem{theorem}{Theorem}
\newtheorem{definition}[theorem]{Definition}
\newtheorem{example}[theorem]{Example}
\newtheorem{lemma}{Lemma}[theorem]
\newtheorem{rmrk}[theorem]{Remark}
\numberwithin{equation}{section}
\numberwithin{equation}{section}
\numberwithin{lemma}{section}
\numberwithin{theorem}{section}
\numberwithin{corollary}{section}
\begin{document}
	{	{ \LARGE{\title{{\bf Convergence analysis for pseudo-monotone variational inequality problem involving projections onto a moving ball}}}}
		\author[1]{Watanjeet Singh$^{\orcidlink{0000-0002-6456-357X}}$\thanks{Corresponding author: watanjeetsingh@gmail.com }}
		\author[1]{Sumit Chandok$^{\orcidlink{0000-0003-1928-2952}}$ }
		\affil[1]{\small{Department of Mathematics, Thapar Institute of Engineering and Technology, \protect\\ Patiala, 147001, Punjab, India\protect\\
				{watanjeetsingh@gmail.com}, {sumit.chandok@thapar.edu}}}
		\date{}
		\maketitle
		\begin{abstract}
			This paper presents an iterative scheme that converges to the solution of a pseudo-monotone variational inequality problem in the setting of $\mathbb{R}^{n}$. Traditional methods often require projections onto the feasible set $\mathfrak{C}$ or onto a half-space containing $\mathfrak{C}$. However, computing projections onto a complicated feasible set can be difficult, and projections onto a half-space may fall outside $\mathfrak{C}$. Keeping this in mind, we aim to develop an iterative scheme that projects onto a ball that is contained in a feasible set and has an explicit expression. Our iterative scheme does not require prior knowledge of the Lipschitz constant of the cost operator. Finally, we provide some numerical experiments to show the effectiveness of our algorithm.
		\end{abstract}
		\noindent
		{\bf Keywords:} Variational inequality, Pseudo-monotone mapping, Lipschitz continuous, Moving ball projection.   \\
		\noindent
		{\bf 2020 Mathematics Subject Classification:} 47H05. 47J20. 47J25. 65K15 
		\section{Introduction}
		The theory of variational inequality problems, in short, VIP, is an important mathematical problem that has been widely recognized for solving numerous problems arising in the real world. The origins of the theory trace back to 1964, when Stampacchia \cite{1a} introduced it to deal with the partial differential equations simulated from mechanics. The classical problem is to find $x \in \mathfrak{C}$ such that
		\begin{align} \label{vip1}
			\langle \mathcal{A}(x),y-x\rangle \geq 0,
		\end{align}
		for all $y \in \mathfrak{C}$, where $\mathfrak{C}$ is non-empty closed and convex subset of $\mathbb{R}^{n}$, and $\mathcal{A}: \mathbb{R}^{n} \to \mathbb{R}^{n}$ is a given mapping. The solution set to the problem \eqref{vip1} is denoted by $VI(\mathfrak{C},\mathcal{A})$.
		\\
		The projected gradient method \cite{1} is one of the first methods developed to find a solution for VIP. This method uses only one projection onto the feasible set $\mathfrak{C}$. However, the convergence is guaranteed only under the assumption that the cost operator involved is strongly monotone and Lipschitz continuous. This stronger condition on the cost operator $\mathcal{A}$ limits the scope of applications of this method. To relax this condition, Korpelevich \cite{2} proposed the extragradient method in the setting of finite-dimensional Euclidean space. The cost operator is assumed to be monotone and Lipschitz continuous. It is worth noting that, although the condition on $\mathcal{A}$ has been relaxed, this has been compensated by increasing the number of projections onto the feasible set in each iteration. In general, computing projections onto an arbitrary closed and convex set is a complicated task. This limitation can seriously affect the effectiveness of the extragradient method. Thus, a natural question arises whether we can reduce the number of projections onto the feasible set $\mathfrak{C}$ in each iteration. Censor et al. \cite{3} proposed a subgradient extragradient method to reduce the number of projections onto $\mathfrak{C}$ in each iteration. The method proposed by Censor et al. \cite{3} requires one projection onto $\mathfrak{C}$ followed by a projection onto a half-space. This is effective approach as the projection onto the half-space has an explicit expression. They obtained a weak convergence result assuming that the cost operator $\mathcal{A}$ is monotone and $L$-Lipschitz continuous. 
		\\
		It is known that any closed and convex set $\mathfrak{C}$ can be expressed as 
		\begin{align}
			\mathfrak{C}=\{x \in \mathbb{R}^{n}: f(x)\leq 0\},
		\end{align}
		where $f:\mathbb{R}^{n} \to \mathbb{R}$ is convex function. For instance, we can consider $f(x) \coloneqq dist(x,\mathfrak{C})$, where "dist" is the distance function. Notice that all the algorithms discussed rely on computing projections onto the feasible set. However, such projections are not always straightforward to compute and can often be computationally expensive. 
		\\
		In 2019, Cao and Guo \cite{4} calculated two projections onto a half-space that contains $\mathfrak{C}$ and provided a weak convergence result using a fixed step size. However, this step size depends on the Lipschitz constant of the cost operator as well as the Gâteaux differential of the convex function $f(x)$, which may limit the practical applicability of their method. Ma and Wang \cite{Wang} improved \cite{4} by proposing a new algorithm that uses a self-adaptive step size which do not require prior knowledge of the Lipschitz constant of the cost operator. Nevertheless, their method still requires computing the Lipschitz constant corresponding to the Gâteaux differential of the convex function $f(x)$. In 2022, Chen and Ye \cite{5} proposed a modified subgradient extragradient method in the setting of Hilbert space. The projections are calculated onto two different half-spaces in each iteration. The convergence is proved under the assumption that cost operator is monotone, $L$-Lipschitz continuous and the following Condition \textbf{(T)} holds:
		\\
		\textbf{(T):} There exists a constant $K>0$ such that $\|\mathcal{A}(x)\| \leq K \|\nabla f(x)\|$, for any $x \in \partial \mathfrak{C}$, where $\partial \mathfrak{C}$ is the boundary of $\mathfrak{C}$. \\
		In order to remove Condition \textbf{(T)}, Zhang et al. \cite{6} proposed a new low-cost feasible projection algorithm in which each projection is calculated onto a ball contained in the set $\mathfrak{C}$. The strong convergence is carried out with the cost operator being pseudo-monotone, $L$-Lipschitz continuous and Slater's condition holds on $\mathfrak{C}$, i.e., $\{x \in \mathbb{R}^{n}: f(x)<0\} \neq 0$. Recently, Feng et al. \cite{7} proposed another variant of the moving ball method using a fixed step size that depends on the Lipschitz constant of the cost operator.
		\\
		It is worth noting that both Zhang et al. \cite{6} and Feng et al. \cite{7} methods used a step size that requires the value of the Lipschitz constant of the cost operator. Taking inspiration from the above works, we present an iterative scheme that calculates projection onto a ball that is contained in the set $\mathfrak{C}$ by combining the extragradient method and projection-contraction method. By incorporating a line search rule that eliminates the need for prior knowledge of the Lipschitz constant, we establish a convergence result for variational inequalities involving a pseudo-monotone operator.
		\\
		This paper is organized as follows: In section 2, we recall some definitions and lemmas required to understand our main result. In section 3, we present our iterative algorithm and prove its convergence result under some assumptions. Finally, we provide some numerical experiments to discuss the performance of our algorithm in comparison with other existing algorithms.

		\section{Preliminaries}
		In this section, we recall some of the important definitions and results that are required to understand our main result. For any sequence $\{x_{n}\}$, the weak and strong convergence to an element $x$ is denoted by $x_{n} \rightharpoonup x$ and $x_{n} \to x$, respectively, as $n \to \infty$. For any $x,y \in \mathbb{R}^{n}$, and $\alpha \in \mathbb{R}$, we have
		\begin{align*}
			\|x+y\|^2 &\leq \|x\|^2+2\langle y,x+y \rangle,\\
			\|x+y\|^2 & = \|x\|^2+\|y\|^2+2\langle x,y \rangle,\\
			\|\alpha x +(1-\alpha)y\|^2&=\alpha \|x\|^2+(1-\alpha)\|y\|^2-\alpha (1-\alpha)\|x-y\|^2.
		\end{align*}
		\begin{definition}
			Let $\mathcal{A}:\mathbb{R}^{n} \to \mathbb{R}^{n}$ be a mapping, then A is said to be:
			\begin{enumerate}
				\item[(i)] $L$-Lipschitz continuous if there a constant $L>0$ such that $\|\mathcal{A}(x)-\mathcal{A}(y)\| \leq L\|x-y\|$, for all $x,y \in \mathbb{R}^{n}$.
				\item[(ii)] monotone if $\langle \mathcal{A}(x)-\mathcal{A}(y),x-y \rangle \geq 0$, for all $x,y\in \mathbb{R}^{n}$.
				\item[(iii)]  pseudo-monotone if $\langle \mathcal{A}(x),y-x \rangle \geq 0$ implies $\langle \mathcal{A}(y),y-x \rangle \geq 0$, for all $x,y \in \mathbb{R}^{n}$.
			\end{enumerate}
		\end{definition}
		For each point $x \in \mathbb{R}^{n}$, there is a unique nearest point $P_{\mathfrak{C}}(x)$ in $\mathfrak{C} \subset \mathbb{R}^{n}$, such that $$P_{\mathfrak{C}}(x)=\arg \min\{\|x-y\|, y\in \mathfrak{C}\}.$$ 
		The mapping $P_{\mathfrak{C}}:\mathbb{R}^{n} \to \mathfrak{C}$ is called the metric projection of $\mathbb{R}^{n}$ onto $\mathfrak{C}$. It is known that the metric projection $P_{\mathfrak{C}}$ is non-expansive and satisfy the following result:
		\begin{lemma} \label{le1}
			Let $\mathfrak{C}$ be a non-empty closed convex subset of $\mathbb{R}^{n}$ and $P_{\mathfrak{C}}$ be the metric projection from $\mathbb{R}^{n}$ to $\mathfrak{C}$. Then the following results hold for any $x \in \mathbb{R}^{n}$:
			\begin{enumerate}
				\item[(i)] $\langle x-P_{\mathfrak{C}}(x),P_{\mathfrak{C}}(x)-y \rangle \geq 0$, for all $y \in \mathfrak{C}$. 
				\item[(ii)] $\|P_{\mathfrak{C}}(x)-y\|^2 \leq \|x-y\|^2-\|x-P_{\mathfrak{C}}(x)\|^2$, for all $y \in \mathfrak{C}$.
			\end{enumerate}
		\end{lemma}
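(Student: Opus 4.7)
The plan is to prove part (i) first from the variational (minimization) characterization of the projection, then derive part (ii) as a direct algebraic consequence of (i) by expanding a squared norm.

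For part (i), I would fix $y \in \mathfrak{C}$ and exploit convexity of $\mathfrak{C}$ to build an admissible one-parameter family joining $P_{\mathfrak{C}}(x)$ to $y$. Concretely, set $z_t := (1-t)P_{\mathfrak{C}}(x) + ty \in \mathfrak{C}$ for $t \in [0,1]$ and consider $\phi(t) := \|x - z_t\|^2$. Since $P_{\mathfrak{C}}(x)$ is the unique minimizer of $\|x - \cdot\|$ over $\mathfrak{C}$, we have $\phi(t) \geq \phi(0)$ for every $t \in [0,1]$. Expanding $\phi$ gives
\begin{equation*}
\phi(t) - \phi(0) = -2t\,\langle x - P_{\mathfrak{C}}(x),\, y - P_{\mathfrak{C}}(x)\rangle + t^2\|y - P_{\mathfrak{C}}(x)\|^2 \geq 0.
\end{equation*}
Dividing by $t > 0$ and sending $t \to 0^+$ yields $\langle x - P_{\mathfrak{C}}(x), y - P_{\mathfrak{C}}(x)\rangle \leq 0$, which is exactly the claimed inequality after a sign flip.

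For part (ii), I would use the identity
\begin{equation*}
\|x - y\|^2 = \|x - P_{\mathfrak{C}}(x)\|^2 + 2\,\langle x - P_{\mathfrak{C}}(x), P_{\mathfrak{C}}(x) - y\rangle + \|P_{\mathfrak{C}}(x) - y\|^2,
\end{equation*}
applied through the polarization formula already listed in the preliminaries. Part (i) makes the cross term non-negative, so dropping it and rearranging gives $\|P_{\mathfrak{C}}(x) - y\|^2 \leq \|x - y\|^2 - \|x - P_{\mathfrak{C}}(x)\|^2$, which is (ii).

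The only non-routine point is establishing the variational inequality (i); everything else is algebraic. The main subtlety to watch is that the argument for (i) needs the restriction $t \in [0,1]$ so that $z_t$ remains in $\mathfrak{C}$ (only convexity, not the full linear span, is available); taking the one-sided derivative at $t = 0^+$ handles this cleanly. Existence and uniqueness of $P_{\mathfrak{C}}(x)$ are taken as standard facts about nearest-point projection onto a closed convex set in $\mathbb{R}^n$, as stated just before the lemma.
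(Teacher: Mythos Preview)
Your proof is correct and is precisely the standard textbook argument for the variational characterization of the metric projection and its Pythagorean consequence. The paper itself does not supply a proof of this lemma; it is stated in the preliminaries as a known result, so there is nothing further to compare.
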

		The projection onto a ball has an explicit expression, as described in the following result:
		\begin{lemma} \cite{8}\label{baush1}
			Let $\mathfrak{D}$ to be a nonempty closed convex subset of $\mathbb{R}^{n}$, and $\mathfrak{G}=\mathfrak{D}+\mathfrak{B}(0;r)$, where $\mathfrak{B}(0;r)$ is a closed ball centered at origin with radius $r>0$. Then, $\mathfrak{G}$ is nonempty closed convex subset of $\mathbb{R}^{n}$ and for any $x \in \mathbb{R}^{n}$, the projection $P_{\mathfrak{G}}(x)$ is defined as:
			\[
			P_{\mathfrak{G}}(x) =
			\begin{cases}
				x, & \text{if } \|x-P_{\mathfrak{D}}(x)\| \leq r; \\
				P_{\mathfrak{D}}(x)+r\frac{x-P_{\mathfrak{D}}(x)}{\|x-P_{\mathfrak{D}}(x)\|}, & \text{otherwise }.
			\end{cases}
			\]
		\end{lemma}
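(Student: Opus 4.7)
The plan is to verify the three claims in turn: (a) $\mathfrak{G}$ is nonempty, closed, and convex; (b) in the interior case $\|x-P_{\mathfrak{D}}(x)\|\le r$, the point $x$ itself already lies in $\mathfrak{G}$; (c) in the remaining case, the proposed formula for the projection can be certified using the variational characterization of Lemma~\ref{le1}(i).

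For (a), nonemptiness and convexity are immediate from the Minkowski sum of two nonempty convex sets being nonempty and convex. Closedness requires a little more care: since $\mathfrak{B}(0;r)$ is compact and $\mathfrak{D}$ is closed, the sum $\mathfrak{D}+\mathfrak{B}(0;r)$ is closed by the standard ``closed plus compact is closed'' argument (take a convergent sequence $d_k+b_k\to w$; pass to a convergent subsequence of $b_k\to b\in\mathfrak{B}(0;r)$; conclude $d_k\to w-b\in\mathfrak{D}$, whence $w\in\mathfrak{G}$). For (b), if $\|x-P_{\mathfrak{D}}(x)\|\le r$ then $x-P_{\mathfrak{D}}(x)\in\mathfrak{B}(0;r)$, so $x=P_{\mathfrak{D}}(x)+(x-P_{\mathfrak{D}}(x))\in\mathfrak{G}$; since projection onto a set is the identity on that set, $P_{\mathfrak{G}}(x)=x$.

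For (c), denote $t\coloneqq\|x-P_{\mathfrak{D}}(x)\|>r$ and let $z\coloneqq P_{\mathfrak{D}}(x)+r\,\frac{x-P_{\mathfrak{D}}(x)}{t}$. Membership $z\in\mathfrak{G}$ is clear since the second summand has norm exactly $r$. By the variational characterization of the metric projection (Lemma~\ref{le1}(i) applied to $\mathfrak{G}$), it suffices to show $\langle x-z,y-z\rangle\le 0$ for every $y\in\mathfrak{G}$, written as $y=d+b$ with $d\in\mathfrak{D}$ and $b\in\mathfrak{B}(0;r)$. A direct computation gives
\begin{equation*}
x-z=\frac{t-r}{t}\bigl(x-P_{\mathfrak{D}}(x)\bigr),\qquad y-z=(d-P_{\mathfrak{D}}(x))+\Bigl(b-\frac{r}{t}(x-P_{\mathfrak{D}}(x))\Bigr),
\end{equation*}
so that $\langle x-z,y-z\rangle$ becomes $\frac{t-r}{t}$ times the sum of $\langle x-P_{\mathfrak{D}}(x),d-P_{\mathfrak{D}}(x)\rangle$ and $\langle x-P_{\mathfrak{D}}(x),b\rangle-rt$.

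The first of these two inner products is $\le 0$ by Lemma~\ref{le1}(i) applied to the projection onto $\mathfrak{D}$, while the second is $\le 0$ by Cauchy--Schwarz combined with $\|b\|\le r$ and $\|x-P_{\mathfrak{D}}(x)\|=t$. Since $\frac{t-r}{t}>0$, we conclude $\langle x-z,y-z\rangle\le 0$ for all $y\in\mathfrak{G}$, hence $z=P_{\mathfrak{G}}(x)$. The main subtlety is the second case: one has to recognize that $x-z$ is a positive multiple of $x-P_{\mathfrak{D}}(x)$ so that the projection inequality for $\mathfrak{D}$ can be transferred to $\mathfrak{G}$, and that the ``ball'' component is controlled purely by Cauchy--Schwarz; the closedness of $\mathfrak{G}$ is the only nontrivial topological point, and it rests essentially on compactness of $\mathfrak{B}(0;r)$.
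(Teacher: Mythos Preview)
Your proof is correct. The paper does not actually prove this lemma; it is quoted verbatim from \cite{8} (Bauschke--Combettes) and used as a black box, so there is no ``paper's own proof'' to compare against. Your argument is the standard one: closedness of $\mathfrak{G}$ via the ``closed plus compact'' lemma, the interior case handled by membership, and the boundary case certified by the obtuse-angle characterization of the projection. The only computation worth double-checking is the decomposition in part~(c), and it is right: with $t=\|x-P_{\mathfrak{D}}(x)\|$ one has
\[
\langle x-z,\,y-z\rangle=\frac{t-r}{t}\Bigl[\underbrace{\langle x-P_{\mathfrak{D}}(x),\,d-P_{\mathfrak{D}}(x)\rangle}_{\le 0\ \text{by Lemma~\ref{le1}(i) for }\mathfrak{D}}\;+\;\underbrace{\langle x-P_{\mathfrak{D}}(x),\,b\rangle-rt}_{\le 0\ \text{by Cauchy--Schwarz}}\Bigr]\le 0,
\]
so $z=P_{\mathfrak{G}}(x)$ as claimed.
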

		The following inequality is useful for functions whose gradients are Lipschitz continuous:
		\begin{lemma} \cite{9}
			If $f:\mathbb{R}^{n} \to \mathbb{R}$ is continuously differentiable function and $\nabla f $ is Lipschitz continuous with constant $L_{f}$, then we have
			\begin{align*}
				f(y) \leq f(x)+\langle \nabla f(x),y-x\rangle+\frac{L_{f}}{2}\|y-x\|^2, ~\text{for all}~x,y \in \mathbb{R}^{n}.
			\end{align*}
		\end{lemma}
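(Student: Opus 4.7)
The plan is to reduce the multivariate statement to a one-dimensional problem via the fundamental theorem of calculus along the segment joining $x$ and $y$, and then to control the resulting error term using the Lipschitz hypothesis on $\nabla f$.

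Concretely, I would fix $x,y\in\mathbb{R}^n$ and introduce the auxiliary function $g:[0,1]\to\mathbb{R}$ defined by $g(t)=f\bigl(x+t(y-x)\bigr)$. Since $f$ is continuously differentiable, $g$ is continuously differentiable with $g'(t)=\langle \nabla f(x+t(y-x)),\,y-x\rangle$. Applying the fundamental theorem of calculus yields
\begin{equation*}
f(y)-f(x)=g(1)-g(0)=\int_{0}^{1}\bigl\langle \nabla f(x+t(y-x)),\,y-x\bigr\rangle\,dt.
\end{equation*}
Subtracting the linear term $\langle \nabla f(x),y-x\rangle=\int_{0}^{1}\langle \nabla f(x),y-x\rangle\,dt$ from both sides reorganizes the identity into the remainder form
\begin{equation*}
f(y)-f(x)-\langle \nabla f(x),y-x\rangle=\int_{0}^{1}\bigl\langle \nabla f(x+t(y-x))-\nabla f(x),\,y-x\bigr\rangle\,dt.
\end{equation*}

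From here the remainder of the argument is routine. I would bound the integrand by Cauchy--Schwarz, then invoke the $L_f$-Lipschitz continuity of $\nabla f$ to get $\|\nabla f(x+t(y-x))-\nabla f(x)\|\le L_f\, t\,\|y-x\|$, so that the integral is at most $L_f\|y-x\|^2\int_{0}^{1} t\,dt=\tfrac{L_f}{2}\|y-x\|^2$. Rearranging produces the desired inequality. There is no substantial obstacle here; the only point requiring a little care is the differentiability and continuity of $t\mapsto \nabla f(x+t(y-x))$ that justifies applying the fundamental theorem of calculus, and this follows immediately from the hypothesis that $f\in C^1$.
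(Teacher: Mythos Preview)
Your argument is correct and is the standard textbook proof of this ``descent lemma'': reduce to one dimension via $g(t)=f(x+t(y-x))$, apply the fundamental theorem of calculus, split off the linear term, and bound the remainder using Cauchy--Schwarz together with the $L_f$-Lipschitz hypothesis on $\nabla f$. The paper itself does not supply a proof---it simply cites the result from \cite{9} (Ortega and Rheinboldt)---and the argument found there is essentially the one you have written, so there is nothing further to compare.
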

		For any continuously differentiable convex function $f:\mathbb{R}^{n} \to \mathbb{R}$ with a $L_{f}$-Lipschitz continuous gradient and $x \in \mathbb{R}^{n}$, we define the moving ball $\mathfrak{B}(x)$ as follows:
		\begin{align} \label{ball1}
			\mathfrak{B}(x)&\coloneqq \{y \in \mathbb{R}^{n}: f(x)+\langle \nabla f(x),y-x\rangle+\frac{L_{f}}{2}\|y-x\|^2 \leq 0\} \notag \\
			&=\{y \in \mathbb{R}^{n}:\|y-(x-\frac{1}{L_{f}}\nabla f(x))\|^2\leq \frac{1}{L_{f}^2}\|\nabla f(x)\|^2-\frac{2}{L_{f}}f(x)\}.
		\end{align}
		Now, we give a result based on the properties of $\mathfrak{B}(x)$:
		\begin{lemma} \cite{10}
			Let $\mathfrak{C}=\{x \in \mathbb{R}^{n}:f(x) \leq 0\}$, where $f:\mathbb{R}^{n} \to \mathbb{R}$ is a continuously differentiable convex function. Suppose Slater's condition is satisfied for $\mathfrak{C}$, and let $\mathfrak{B}(x)$ be defined as \eqref{ball1}. Then for any $x \in \mathfrak{C}$, we have
			\begin{enumerate}
				\item $\mathfrak{B}(x)$ is nonempty closed convex set;
				\item $\mathfrak{B}(x) \subseteq \mathfrak{C}$;
				\item Slater's condition is satisfied for $\mathfrak{B}(x)$.
			\end{enumerate}
		\end{lemma}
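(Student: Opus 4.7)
The plan is to verify the three assertions in order, exploiting the two representations of $\mathfrak{B}(x)$ given in \eqref{ball1}: the quadratic sublevel-set form is convenient for membership questions, while the closed-ball form makes the topological and geometric features transparent.

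For (1), closedness and convexity are immediate from the ball form, so the real content is nonemptiness. Since $x \in \mathfrak{C}$ gives $f(x) \le 0$, substituting $y=x$ into the quadratic form yields $f(x) \le 0$, so $x \in \mathfrak{B}(x)$; equivalently, the squared radius $\tfrac{1}{L_f^{2}}\|\nabla f(x)\|^{2} - \tfrac{2}{L_f} f(x)$ is nonnegative.

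For (2), fix $y \in \mathfrak{B}(x)$ and apply the descent-type inequality for functions with $L_f$-Lipschitz gradient (the lemma preceding \eqref{ball1}) to obtain
\[
f(y) \;\le\; f(x) + \langle \nabla f(x),\, y-x\rangle + \tfrac{L_f}{2}\|y-x\|^{2} \;\le\; 0,
\]
where the second inequality is the defining condition of $\mathfrak{B}(x)$; hence $y \in \mathfrak{C}$.

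The main obstacle is (3). Slater's condition for $\mathfrak{B}(x)$ means that the convex quadratic defining inequality is strictly satisfied at some point, which for a ball is equivalent to having strictly positive radius, i.e.\ $\|\nabla f(x)\|^{2} > 2L_f f(x)$. I would split on the value of $f(x)$. If $f(x)<0$, the inequality is automatic since the left-hand side is nonnegative; equivalently, $x$ itself is an interior point. If $f(x)=0$, the strict inequality reduces to $\nabla f(x)\neq 0$, which I would establish by contradiction: if $\nabla f(x)=0$, then convexity of $f$ would force $x$ to be a global minimizer, giving $f \ge f(x)=0$ on all of $\mathbb{R}^{n}$ and contradicting Slater's condition on $\mathfrak{C}$. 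Thus the radius is positive in every admissible case, and Slater's condition transfers from $\mathfrak{C}$ to $\mathfrak{B}(x)$.
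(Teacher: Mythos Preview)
The paper does not supply its own proof of this lemma; it is stated with a citation to \cite{10} and used as a black box. Your argument is correct and self-contained: part (1) follows from $x\in\mathfrak{B}(x)$ together with the closed-ball description; part (2) is an immediate consequence of the descent lemma quoted just before \eqref{ball1}; and for part (3) your case split on the sign of $f(x)$, combined with the observation that $\nabla f(x)=0$ at a boundary point would force $x$ to be a global minimizer of the convex function $f$ and hence violate Slater's condition on $\mathfrak{C}$, is exactly the right mechanism. You have therefore provided what the paper omits.
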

		\begin{lemma} \cite{11} \label{2.5lem}
			Assume that the solution set $VI(\mathfrak{C},\mathcal{A})$ is nonempty, and $\mathfrak{C}=\{x \in \mathbb{R}^{n}:f(x) \leq 0\}$, where $f:\mathbb{R}^{n} \to \mathbb{R}$ is a continuously differentiable convex function. Then $x^{*} \in VI(\mathfrak{C},\mathcal{A})$ if and only if either
			\begin{enumerate}
				\item $\mathcal{A}(x^{*})=0,$ or
				\item $x^{*} \in \partial \mathfrak{C} $ and there exists $\eta >0$ such that $\mathcal{A}(x^{*})=-\eta \nabla f(x^{*})$.
			\end{enumerate}
		\end{lemma}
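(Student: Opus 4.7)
The plan is to reduce the variational-inequality condition $\langle \mathcal{A}(x^{*}), y - x^{*}\rangle \geq 0$ for all $y \in \mathfrak{C}$ to the geometric statement that $-\mathcal{A}(x^{*})$ lies in the normal cone $N_{\mathfrak{C}}(x^{*})$, and then to exploit the explicit description of the normal cone of the sublevel set $\mathfrak{C}=\{f\leq 0\}$ of a smooth convex function.

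Sufficiency is the easier direction. In case (1), $\mathcal{A}(x^{*})=0$ trivially implies $\langle \mathcal{A}(x^{*}),y-x^{*}\rangle = 0 \geq 0$ for every $y$. In case (2), I would invoke the gradient inequality for convex $f$,
\begin{equation*}
f(y) \geq f(x^{*}) + \langle \nabla f(x^{*}), y - x^{*}\rangle,
\end{equation*}
together with $f(x^{*})=0$ (because $x^{*}\in\partial \mathfrak{C}$) and $f(y)\leq 0$ (because $y\in\mathfrak{C}$), to conclude $\langle \nabla f(x^{*}), y - x^{*}\rangle \leq 0$. Multiplying by $-\eta<0$ and using $\mathcal{A}(x^{*})=-\eta\nabla f(x^{*})$ delivers the VI inequality.

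For necessity, I would split on whether $x^{*}$ sits in the interior or on the boundary of $\mathfrak{C}$. If $x^{*}\in \operatorname{int}(\mathfrak{C})$, then for sufficiently small $t>0$ the point $y = x^{*} - t\mathcal{A}(x^{*})$ lies in $\mathfrak{C}$, and substituting into the VI gives $-t\|\mathcal{A}(x^{*})\|^{2}\geq 0$, forcing $\mathcal{A}(x^{*})=0$ and placing us in case (1). If $x^{*}\in \partial \mathfrak{C}$, so $f(x^{*})=0$, the VI says precisely that $-\mathcal{A}(x^{*}) \in N_{\mathfrak{C}}(x^{*})$. Under Slater's condition, the normal cone to the sublevel set at a boundary point equals the ray $\{\eta \nabla f(x^{*}):\eta\geq 0\}$, so there exists $\eta\geq 0$ with $\mathcal{A}(x^{*})=-\eta\nabla f(x^{*})$; the subcase $\eta=0$ folds into case (1), while $\eta>0$ yields case (2).

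The main obstacle is the boundary case: identifying the normal cone of the sublevel set with the ray spanned by $\nabla f(x^{*})$. This is a KKT / constraint-qualification step and requires a Slater-type hypothesis (implicit from the context in which Lemma \ref{2.5lem} is invoked) to rule out degenerate geometries where $\nabla f(x^{*})$ might vanish. Once this identification is granted, the rest of the proof is a routine manipulation of inner products together with the convex gradient inequality.
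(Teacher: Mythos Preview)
The paper does not supply its own proof of this lemma; it is quoted verbatim from reference \cite{11} (He and Xu), so there is nothing in the present paper to compare your argument against. Your sketch is the standard route to this result and is essentially correct: rewrite the VI as $-\mathcal{A}(x^{*})\in N_{\mathfrak{C}}(x^{*})$, then split on interior versus boundary and use the explicit normal-cone formula for a smooth sublevel set at a boundary point. You are also right to flag that the boundary identification $N_{\mathfrak{C}}(x^{*})=\{\eta\nabla f(x^{*}):\eta\geq 0\}$ needs a constraint qualification---Slater's condition (Assumption (A4) in the paper) guarantees $\nabla f(x^{*})\neq 0$ whenever $f(x^{*})=0$, since otherwise $x^{*}$ would be a global minimizer of the convex $f$ and Slater would force $f(x^{*})<0$. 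The lemma as stated omits this hypothesis, but it is in force everywhere the lemma is applied.
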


		\section{Main Results}
		In this section, we present our main algorithm and results. Suppose that the following assumptions hold:
		\begin{enumerate}
			\item[(A1)] The set $\mathfrak{C}$ is defined by 
			\begin{align*}
				\mathfrak{C}=\{x \in \mathbb{R}^{n}:f(x) \leq 0\};
			\end{align*}
			where $f:\mathbb{R}^{n} \to \mathbb{R}$ is a continuously differentiable convex function and $\nabla f$ is $L_{f}$-Lipschitz continuous on $\mathbb{R}^{n}$.
			\item [(A2)] $\mathcal{A}: \mathbb{R}^{n} \to \mathbb{R}^{n}$ is pseudo-monotone and $L$-Lipschitz continuous on $\mathfrak{C}$.
			\item [(A3)] The solution set $VI(\mathfrak{C},\mathcal{A})$ is non-empty.
			\item [(A4)] Slater's condition is satisfied for $\mathfrak{C}$; i.e. there exists $\hat{x} \in \mathbb{R}^{n}$ such that $f(\hat{x})<0$.
		\end{enumerate}
		We now present our main algorithm.
		
		\begin{table}[ht]
			\centering
			\begin{tabular}{c}
				\hline
				\textbf{Algorithm 3.1: New low-cost iterative moving ball algorithm }  \\
				\hline
			\end{tabular}
		\end{table} 
		\textbf{Initialization:} Given $\mu,\delta \in (0,1),\sigma>0$, $\gamma  \in (0,2)$ and $\mathfrak{B}(x)$ is defined in \eqref{ball1}. Choose $x_{1} \in \mathfrak{C}$.\\
		\textbf{Step 1:} Compute
		\begin{align*}
			y_{n}=P_{\mathfrak{B}(x_{n})}(x_{n}-\lambda_{n}\mathcal{A}(x_{n})),
		\end{align*}
		where
		$\lambda_{n}=\sigma \delta^{k_{n}}$, where $k_{n}$ is the smallest non-negative integer such that 
		\begin{align} \label{line search}
			\lambda_{n}\|\mathcal{A}(x_{n})-\mathcal{A}(y_{n})\|\leq \mu \|x_{n}-y_{n}\|.
		\end{align}
		If $x_{n}=y_{n}$, then stop; and $y_{n}$ is a solution to the VIP, otherwise\\
		\textbf{Step 2:} Compute
		\begin{align*}
			d_{n}&=x_{n}-y_{n}-\lambda_{n}(\mathcal{A}(x_{n})-\mathcal{A}(y_{n})),\\ x_{n+1}&=P_{\mathfrak{B}(x_{n})}(x_{n}-\gamma \lambda_{n}\rho_{n}\mathcal{A}(y_{n})),
		\end{align*}
		where 
		\begin{align} \label{rhoeqn}
			\rho_{n}=\frac{\langle x_{n}-y_{n},d_{n}\rangle}{\|d_{n}\|^2}.
		\end{align}

		Set $n \gets n+1$ and go to \textbf{Step 1.}
		\\
		\begin{rmrk} \label{remarks1a}
			\begin{enumerate}
				\item[(a.)] It is easy to prove that $d_{n}=0$ if and only if $x_{n}=y_{n}$.
				\item[(b.)] The line search rule \eqref{line search} under the Assumptions (A2)-(A3) in Algorithm 3.1 is well defined \cite{lsearch}, and 
				\begin{align*}
					\min\{\sigma, \frac{\mu \delta}{L}\} \leq \lambda_{n} \leq \sigma.
				\end{align*}
				\item[(c.)] Since Slater's condition is satisfied for $\mathfrak{B}(x)$, for any $x \in \mathfrak{C}$, it follows that $r=\sqrt{\frac{1}{L_{f}^2}\|\nabla f(x_{n})\|^2-\frac{2}{L_{f}}f(x_{n})}>0$.
				\item[(d.)] Let $\{x_{n}\}$ and $\{y_{n}\}$ be two sequences generated by Algorithm 3.1. If we consider $\mathfrak{D}=\{x_{n}-\frac{1}{L_{f}}\nabla f(x_{n})\}$ and $r=\sqrt{\frac{1}{L_{f}^2}\|\nabla f(x_{n})\|^2-\frac{2}{L_{f}}f(x_{n})}$ in Lemma \ref{baush1}, we have
				\begin{align*}
					y_{n}=\begin{cases}
						x_{n}-\lambda_{n}\mathcal{A}(x_{n}), & \text{if } x_{n}-\lambda_{n}\mathcal{A}(x_{n}) \in \mathfrak{B}(x_{n}) \\
						x_{n}-\frac{1}{L_{f}}\nabla f(x_{n})+r\frac{\frac{1}{L_{f}}\nabla f(x_{n})-\lambda_{n}\mathcal{A}(x_{n})}{\|\frac{1}{L_{f}}\nabla f(x_{n})-\lambda_{n}\mathcal{A}(x_{n})\|}, & \text{otherwise.} 
					\end{cases}
				\end{align*}
				Similarly,
				\begin{align*}
					x_{n+1}=\begin{cases}
						x_{n}-\gamma \lambda_{n}\rho_{n}\mathcal{A}(y_{n}), & \text{if } x_{n}-\gamma \lambda_{n} \rho_{n}\mathcal{A}(y_{n}) \in \mathfrak{B}(x_{n}) \\
						x_{n}-\frac{1}{L_{f}}\nabla f(x_{n})+r\frac{\frac{1}{L_{f}}\nabla f(x_{n})-\gamma\lambda_{n} \rho_{n}\mathcal{A}(y_{n})}{\|\frac{1}{L_{f}}\nabla f(x_{n})-\gamma \lambda_{n} \rho_{n} \mathcal{A}(y_{n})\|}, & \text{otherwise.}
					\end{cases}
				\end{align*}
				
			\end{enumerate}
		\end{rmrk}
		We now present the convergence analysis.
		\begin{lemma} \label{lem3.1}
			Let $\{\rho_{n}\}$ be the sequence defined in Algorithm 3.1. Then, we have
			\begin{align*}
				\rho_{n}\geq \frac{1-\mu}{(1+\mu)^2}.
			\end{align*}
		\end{lemma}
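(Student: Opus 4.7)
The plan is to expand both the numerator $\langle x_n - y_n, d_n\rangle$ and the denominator $\|d_n\|^2$ in the definition of $\rho_n$, and then apply the line search bound \eqref{line search} in each piece. The key identity is simply to substitute $d_n = (x_n - y_n) - \lambda_n(\mathcal{A}(x_n) - \mathcal{A}(y_n))$ into both expressions.

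First I would handle the numerator. Expanding gives
\begin{align*}
\langle x_n - y_n, d_n\rangle = \|x_n - y_n\|^2 - \lambda_n \langle x_n - y_n,\, \mathcal{A}(x_n) - \mathcal{A}(y_n)\rangle.
\end{align*}
Applying Cauchy--Schwarz and then the line search inequality $\lambda_n \|\mathcal{A}(x_n) - \mathcal{A}(y_n)\| \leq \mu \|x_n - y_n\|$, the cross term is bounded in absolute value by $\mu \|x_n - y_n\|^2$, so the numerator is at least $(1-\mu)\|x_n - y_n\|^2$.

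Next I would bound the denominator from above via the triangle inequality:
\begin{align*}
\|d_n\| \leq \|x_n - y_n\| + \lambda_n \|\mathcal{A}(x_n) - \mathcal{A}(y_n)\| \leq (1+\mu)\|x_n - y_n\|,
\end{align*}
so $\|d_n\|^2 \leq (1+\mu)^2 \|x_n - y_n\|^2$. Dividing the two bounds yields $\rho_n \geq \frac{1-\mu}{(1+\mu)^2}$. (Note that $x_n \neq y_n$ in the relevant case by Remark \ref{remarks1a}(a), so $\|d_n\| > 0$ and the quotient is well defined.)

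There is no real obstacle here; the whole argument is essentially a two-line application of the line search rule combined with Cauchy--Schwarz and the triangle inequality. The only thing to be a bit careful about is justifying that both the numerator and denominator scale as a positive multiple of $\|x_n - y_n\|^2$ so that cancellation gives a clean constant independent of $n$.
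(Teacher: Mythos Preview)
Your argument is correct and matches the paper's proof essentially line for line: the paper expands $\langle x_n-y_n,d_n\rangle$ and bounds the cross term via Cauchy--Schwarz plus \eqref{line search} to get $(1-\mu)\|x_n-y_n\|^2$, then bounds $\|d_n\|\leq(1+\mu)\|x_n-y_n\|$ by the triangle inequality and divides. Your extra remark about $d_n\neq 0$ when $x_n\neq y_n$ is a nice touch that the paper leaves implicit.
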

		\begin{proof}
			Consider 
			\begin{align} \label{2eq}
				\langle x_{n}-y_{n},d_{n} \rangle &=\langle x_{n}-y_{n},x_{n}-y_{n}-\lambda_{n}(\mathcal{A}(x_{n})-\mathcal{A}(y_{n}))\rangle \notag \\
				&=\|x_{n}-y_{n}\|^2-\lambda_{n}\langle x_{n}-y_{n},\mathcal{A}(x_{n})-\mathcal{A}(y_{n})\rangle \notag \\
				& \geq \|x_{n}-y_{n}\|^2-\lambda_{n} \|x_{n}-y_{n}\|\|\mathcal{A}(x_{n})-\mathcal{A}(y_{n})\| ~~~[\text{Using \eqref{line search}}] \notag\\
				&\geq \|x_{n}-y_{n}\|^2-\mu \|x_{n}-y_{n}\|^2 \notag \\
				&=(1-\mu)\|x_{n}-y_{n}\|^2.
			\end{align}
			Also
			$\|d_{n}\|=\|x_{n}-y_{n}-\lambda_{n}(\mathcal{A}(x_{n})-\mathcal{A}(y_{n}))\|\leq \|x_{n}-y_{n}\|+\lambda_{n}\|\mathcal{A}(x_{n})-\mathcal{A}(y_{n})\|\leq (1+\mu)\|x_{n}-y_{n}\|.$ It implies that
			
			\begin{align} \label{3eq}
				\frac{1}{\|d_{n}\|^2} \geq \frac{1}{(1+\mu)^2\|x_{n}-y_{n}\|^2}.
			\end{align}
			From \eqref{2eq} and \eqref{3eq}, we get $\rho_{n}=\frac{\langle x_{n}-y_{n},d_{n}\rangle}{\|d_{n}\|^2}
			\geq \frac{(1-\mu)\|x_{n}-y_{n}\|^2}{(1+\mu)^2\|x_{n}-y_{n}\|^2}\geq \frac{1-\mu}{(1+\mu)^2}.$ Hence the result.
		\end{proof}
		\begin{lemma} \label{lem3.2}
			Let $\{x_{n}\}$ be a sequence generated by Algorithm 3.1. Then, under the Assumptions (A1)-(A4), we have
			\begin{align*}
				\|x_{n+1}-x^{*}\|^2 \leq \|x_{n}-x^{*}\|^2-\|x_{n}-x_{n+1}-\gamma \rho_{n}d_{n}\|^2-(2-\gamma)\gamma \rho_{n}^2\|d_{n}\|^2, ~\text{for all}~x^{*} \in VI(\mathfrak{C},\mathcal{A}).
			\end{align*}
		\end{lemma}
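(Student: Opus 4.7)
The plan is to follow the classical projection-contraction strategy adapted to the moving-ball setting. I would start by opening $\|x_{n+1}-x^*\|^{2}$ with the quadratic identity
\[
\|x_{n+1}-x^{*}\|^{2}=\|x_{n}-x^{*}\|^{2}+2\langle x_{n+1}-x_{n},\,x_{n}-x^{*}\rangle+\|x_{n+1}-x_{n}\|^{2},
\]
then expand $\|x_{n}-x_{n+1}-\gamma\rho_{n}d_{n}\|^{2}$ and use the defining identity $\rho_{n}\|d_{n}\|^{2}=\langle x_{n}-y_{n},d_{n}\rangle$ from \eqref{rhoeqn}. After the routine cancellations the entire claim becomes equivalent to the single bilinear estimate
\[
\langle x_{n}-x_{n+1},\,x_{n+1}-x^{*}\rangle \;\geq\; \gamma\rho_{n}\langle x_{n+1}-y_{n},\,d_{n}\rangle. \qquad(\star)
\]

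The natural tools for $(\star)$ are Lemma~\ref{le1}(i) applied to each of the two projections in Algorithm~3.1, together with pseudo-monotonicity of $\mathcal{A}$. First, applying Lemma~\ref{le1}(i) to $y_{n}=P_{\mathfrak{B}(x_{n})}(x_{n}-\lambda_{n}\mathcal{A}(x_{n}))$ with the admissible test point $x_{n+1}\in\mathfrak{B}(x_{n})$ gives $\langle x_{n}-y_{n},x_{n+1}-y_{n}\rangle\leq\lambda_{n}\langle\mathcal{A}(x_{n}),x_{n+1}-y_{n}\rangle$; combining with the expansion $d_{n}=(x_{n}-y_{n})-\lambda_{n}(\mathcal{A}(x_{n})-\mathcal{A}(y_{n}))$ yields the upper bound
\[
\gamma\rho_{n}\langle x_{n+1}-y_{n},d_{n}\rangle \;\leq\; \gamma\rho_{n}\lambda_{n}\langle\mathcal{A}(y_{n}),x_{n+1}-y_{n}\rangle
\]
on the right side of $(\star)$. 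Second, applying Lemma~\ref{le1}(i) to $x_{n+1}=P_{\mathfrak{B}(x_{n})}(x_{n}-\gamma\rho_{n}\lambda_{n}\mathcal{A}(y_{n}))$ with the admissible test point $y_{n}\in\mathfrak{B}(x_{n})$ produces the matching lower bound
\[
\langle x_{n}-x_{n+1},x_{n+1}-y_{n}\rangle \;\geq\; \gamma\rho_{n}\lambda_{n}\langle\mathcal{A}(y_{n}),x_{n+1}-y_{n}\rangle.
\]
Finally, since $y_{n}\in\mathfrak{C}$ and $x^{*}\in VI(\mathfrak{C},\mathcal{A})$, Assumption~(A2) together with the definition of pseudo-monotonicity gives $\langle\mathcal{A}(y_{n}),y_{n}-x^{*}\rangle\geq 0$. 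I would then split $x_{n+1}-x^{*}=(x_{n+1}-y_{n})+(y_{n}-x^{*})$ on the left of $(\star)$; the $(x_{n+1}-y_{n})$-part is closed by stacking the two projection bounds, and the residual cross term involving $y_{n}-x^{*}$ is absorbed by the pseudo-monotonicity inequality.

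The main obstacle is precisely that $x^{*}$ does not in general belong to the moving ball $\mathfrak{B}(x_{n})$, so Lemma~\ref{le1}(i) cannot be invoked with test point $x^{*}$ directly. All projection-type estimates must therefore be routed through the admissible intermediate point $y_{n}\in\mathfrak{B}(x_{n})$, and the transition from $y_{n}$ to $x^{*}$ arranged exclusively through pseudo-monotonicity of $\mathcal{A}$; making the bookkeeping in this final combination close is the technical crux of the argument.
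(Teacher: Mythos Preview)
Your reduction of the claim to the bilinear estimate $(\star)$ is correct, and the two projection inequalities you invoke---Lemma~\ref{le1}(i) applied to $y_{n}$ with test point $x_{n+1}$, and to $x_{n+1}$ with test point $y_{n}$---are both valid and together give
\[
\langle x_{n}-x_{n+1},\,x_{n+1}-y_{n}\rangle \;\geq\; \gamma\rho_{n}\lambda_{n}\langle\mathcal{A}(y_{n}),\,x_{n+1}-y_{n}\rangle \;\geq\; \gamma\rho_{n}\langle x_{n+1}-y_{n},\,d_{n}\rangle.
\]
The gap is in the last sentence. After your splitting, the residual cross term is $\langle x_{n}-x_{n+1},\,y_{n}-x^{*}\rangle$, whereas pseudo-monotonicity only supplies $\langle\mathcal{A}(y_{n}),\,y_{n}-x^{*}\rangle\geq 0$. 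These are different bilinear forms: the vector $x_{n}-x_{n+1}$ is tied to $\mathcal{A}(y_{n})$ only through the projection $x_{n+1}=P_{\mathfrak{B}(x_{n})}(x_{n}-\gamma\rho_{n}\lambda_{n}\mathcal{A}(y_{n}))$, which can rotate it freely, so there is no reason for $\langle x_{n}-x_{n+1},\,y_{n}-x^{*}\rangle$ to be nonnegative. The sketch does not close.

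The paper does not route around $x^{*}$. It applies Lemma~\ref{le1}(ii) (equivalently, the projection inequality with test point $x^{*}$) directly to $x_{n+1}=P_{\mathfrak{B}(x_{n})}(x_{n}-\gamma\lambda_{n}\rho_{n}\mathcal{A}(y_{n}))$, obtaining \eqref{5eq}; after expanding the squares the only $x^{*}$-dependence left is the single term $-2\gamma\lambda_{n}\rho_{n}\langle\mathcal{A}(y_{n}),\,x_{n+1}-x^{*}\rangle$, and now pseudo-monotonicity handles the $(y_{n}-x^{*})$-piece cleanly because it is already paired with $\mathcal{A}(y_{n})$. Your obstacle observation is accurate---Lemma~\ref{le1}(ii) with $y=x^{*}$ presupposes $x^{*}\in\mathfrak{B}(x_{n})$, which the paper uses without comment---but your proposed detour through $y_{n}$ does not repair it: to control the residual you would still need the projection inequality for $x_{n+1}$ at the test point $x^{*}$, precisely the step you set out to avoid.
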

		\begin{proof}
			Consider $x^{*} \in VI(\mathfrak{C},\mathcal{A})$. Then, from Lemma \ref{le1} (ii), we have
			\begin{align} \label{5eq}
				\|x_{n+1}-x^{*}\|^2&=\|P_{\mathfrak{B}(x_{n})}(x_{n}-\gamma \lambda_{n}\rho_{n}\mathcal{A}(y_{n}))-x^{*}\|^2 \notag \\
				& \leq \|x_{n}-\gamma \lambda_{n}\rho_{n}\mathcal{A}(y_{n})-x^{*}\|^2-\|x_{n}-\gamma \lambda_{n}\rho_{n}\mathcal{A}(y_{n})-x_{n+1}\|^2 \notag \\
				&=\|x_{n}-x^{*}\|^2-\|x_{n+1}-x_{n}\|^2-2\langle \gamma \lambda_{n} \rho_{n}\mathcal{A}(y_{n}),x_{n+1}-x^{*} \rangle \notag \\
				&=\|x_{n}-x^{*}\|^2-\|x_{n+1}-x_{n}\|^2-2\gamma \lambda_{n}\rho_{n}\langle \mathcal{A}(y_{n}),x_{n+1}-x^{*}\rangle.
			\end{align}
			Since $\mathcal{A}$ is pseudo-monotone, we have $\langle \mathcal{A}(y_{n}),y_{n}-x^{*}\rangle \geq 0$. Now
			\begin{align} \label{6eq}
				\langle \mathcal{A}(y_{n}),x_{n+1}-x^{*}\rangle &=\langle \mathcal{A}(y_{n}),x_{n+1}-y_{n} \rangle+\langle \mathcal{A}(y_{n}),y_{n}-x^{*}\rangle \notag \\
				&\geq \langle \mathcal{A}(y_{n}),x_{n+1}-y_{n} \rangle.
			\end{align}
			Using Lemma \ref{le1} (i), we see that $\langle x_{n}-\lambda_{n}\mathcal{A}(x_{n})-y_{n},y_{n}-x_{n+1} \rangle\geq 0.$ It implies
			\begin{align*}
				\langle d_{n},x_{n+1}-y_{n}\rangle \leq \lambda_{n}\langle \mathcal{A}(y_{n}),x_{n+1}-y_{n}\rangle. 
			\end{align*}
			As $\gamma$ and $\rho_{n}>0$, we have
			\begin{align} \label{8eq}
				-2\gamma \rho_{n}\lambda_{n} \langle \mathcal{A}(y_{n}),x_{n+1}-y_{n} \rangle &\leq -2\gamma \rho_{n}\langle d_{n},x_{n+1}-y_{n} \rangle \notag \\
				&=-2\gamma \rho_{n}\langle x_{n}-y_{n},d_{n} \rangle+2\gamma \rho_{n} \langle d_{n},x_{n}-x_{n+1} \rangle.
			\end{align}
			From \eqref{6eq} and \eqref{8eq}, we see that
			\begin{align} \label{a8eq}
				-2\gamma \rho_{n}\lambda_{n}\langle \mathcal{A}(y_{n}),x_{n+1}-x^{*}\rangle \leq -2\gamma \rho_{n} \langle x_{n}-y_{n},d_{n} \rangle +2 \gamma \rho_{n} \langle d_{n},x_{n}-x_{n+1}\rangle.
			\end{align}
			From \eqref{rhoeqn}, we have
			\begin{align} \label{9eq}
				-2\gamma \rho_{n} \langle x_{n}-y_{n},d_{n} \rangle=-2\gamma \rho_{n}^2\|d_{n}\|^2.
			\end{align}
			Using \eqref{9eq} in \eqref{a8eq}, we have
			\begin{align} \label{a9eq}
				-2\gamma \rho_{n}\lambda_{n}\langle \mathcal{A}(y_{n}),x_{n+1}-x^{*}\rangle \leq -2\gamma \rho_{n}^2\|d_{n}\|^2+2\gamma \rho_{n}\langle d_{n},x_{n}-x_{n+1}\rangle.
			\end{align}
			Consider
			\begin{align*}
				\|x_{n}-x_{n+1}-\gamma \rho_{n}d_{n}\|^2=\|x_{n}-x_{n+1}\|^2+\gamma^2 \rho_{n}^2\|d_{n}\|^2-2\gamma \rho_{n}\langle d_{n},x_{n}-x_{n+1}\rangle.   
			\end{align*}
			On rearranging, we have
			\begin{align} \label{10eq}
				2\gamma \rho_{n}\langle d_{n},x_{n}-x_{n+1}\rangle=\|x_{n}-x_{n+1}\|^2+\gamma^2\rho_{n}^2\|d_{n}\|^2-\|x_{n}-x_{n+1}-\gamma \rho_{n}d_{n}\|^2.
			\end{align}
			Using \eqref{10eq} in \eqref{a9eq}, we get
			\begin{align} \label{a11eq}
				-2\gamma \rho_{n}\lambda_{n}\langle \mathcal{A}(y_{n}),x_{n+1}-x^{*}\rangle \leq -2\gamma \rho_{n}^2\|d_{n}\|^2+\|x_{n}-x_{n+1}\|^2+\gamma^2\rho_{n}^2\|d_{n}\|^2-\|x_{n}-x_{n+1}-\gamma \rho_{n}d_{n}\|^2.
			\end{align}
			Using \eqref{a11eq} in \eqref{5eq}, we see that
			\begin{align} \label{11eq}
				\|x_{n+1}-x^{*}\|^2 \leq \|x_{n}-x^{*}\|^2-\|x_{n}-x_{n+1}-\gamma \rho_{n}d_{n}\|^2-(2-\gamma)\gamma \rho_{n}^2\|d_{n}\|^2. 
			\end{align}
		\end{proof}
		\begin{theorem}
			Under the Assumptions (A1)-(A4), the sequence $\{x_{n}\}$ generated by Algorithm 3.1 converges to a solution of the VIP.
		\end{theorem}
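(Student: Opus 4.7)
The plan is to proceed in five stages: Fejér monotonicity, vanishing of the relevant residuals, subsequential extraction, verification of the VI via Lemma \ref{2.5lem}, and upgrade to full sequential convergence. First I would invoke Lemma \ref{lem3.2}: because $\gamma\in(0,2)$, the two subtracted terms on the right are nonnegative, so $\{\|x_n-x^*\|\}$ is nonincreasing for every $x^*\in VI(\mathfrak{C},\mathcal{A})$. This gives boundedness of $\{x_n\}$ and, by telescoping, $\sum_n (2-\gamma)\gamma\rho_n^2\|d_n\|^2<\infty$ together with $\sum_n \|x_n-x_{n+1}-\gamma\rho_n d_n\|^2<\infty$.

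Next, I would extract the desired residual bounds. Lemma \ref{lem3.1} provides $\rho_n\geq(1-\mu)/(1+\mu)^2>0$, so the summability forces $\|d_n\|\to 0$. From inequality \eqref{2eq} and Cauchy–Schwarz one has $\|d_n\|\geq (1-\mu)\|x_n-y_n\|$, hence $\|x_n-y_n\|\to 0$. The same inequalities give the matching upper bound $\rho_n\leq 1/(1-\mu)$, so $\|\gamma\rho_n d_n\|\to 0$, and combining this with $\|x_n-x_{n+1}-\gamma\rho_n d_n\|\to 0$ yields $\|x_n-x_{n+1}\|\to 0$.

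Since $\mathbb{R}^n$ is finite dimensional and $\{x_n\}\subset\mathfrak{C}$ is bounded, there is a subsequence $x_{n_k}\to\bar x\in\mathfrak{C}$, and $y_{n_k}\to\bar x$ as well. The core step is then to verify $\bar x\in VI(\mathfrak{C},\mathcal{A})$ via Lemma \ref{2.5lem}, for which I would use the two branches of the projection formula in Remark \ref{remarks1a}(d). Passing, if needed, to a further subsequence so that $\lambda_{n_k}\to\bar\lambda\in[\min\{\sigma,\mu\delta/L\},\sigma]$ (using Remark \ref{remarks1a}(b)): (i) if $x_{n_k}-\lambda_{n_k}\mathcal{A}(x_{n_k})\in\mathfrak{B}(x_{n_k})$ infinitely often, then $y_{n_k}=x_{n_k}-\lambda_{n_k}\mathcal{A}(x_{n_k})$, so $\lambda_{n_k}\mathcal{A}(x_{n_k})=x_{n_k}-y_{n_k}\to 0$ gives $\mathcal{A}(\bar x)=0$; (ii) otherwise, for large $k$ the explicit formula of Remark \ref{remarks1a}(d) applies, and sending $k\to\infty$ (together with continuity of $\mathcal{A}$ and $\nabla f$) shows $\tfrac{1}{L_f}\nabla f(\bar x)$ is a positive multiple of $\tfrac{1}{L_f}\nabla f(\bar x)-\bar\lambda\mathcal{A}(\bar x)$, which forces $f(\bar x)=0$ (by matching norms in the radius expression) and yields $\mathcal{A}(\bar x)=-\eta\nabla f(\bar x)$ for some $\eta>0$, with $\eta>0$ coming from Slater's condition forcing $\nabla f(\bar x)\neq 0$ at a boundary point of $\mathfrak{C}$. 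Either conclusion triggers Lemma \ref{2.5lem} and gives $\bar x\in VI(\mathfrak{C},\mathcal{A})$.

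Finally, substituting $x^{*}=\bar x$ into the Fejér inequality of Lemma \ref{lem3.2} shows $\|x_n-\bar x\|$ is monotone and has a subsequence tending to $0$, so the entire sequence $\{x_n\}$ converges to $\bar x$. The main obstacle is the second branch of Step 4: passing rigorously to the limit in the unit-vector expression for $y_n$ while establishing both $f(\bar x)=0$ and the correct sign of $\eta$; the degenerate sub-case where the denominator $\|\tfrac{1}{L_f}\nabla f(x_{n_k})-\lambda_{n_k}\mathcal{A}(x_{n_k})\|$ tends to zero must be handled separately, and this is where the boundedness of $\lambda_n$ away from zero together with Slater's condition is essential.
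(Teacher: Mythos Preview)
Your proposal is correct and follows essentially the same route as the paper: Fej\'er monotonicity from Lemma~\ref{lem3.2}, telescoping to obtain $\|x_n-y_n\|\to 0$, subsequence extraction with $\lambda_{n_k}\to\bar\lambda>0$, a two-case analysis based on the explicit projection formula of Remark~\ref{remarks1a}(d), application of Lemma~\ref{2.5lem}, and upgrade to full convergence via monotonicity of $\|x_n-\bar x\|$. The only packaging difference is in Case~(ii): the paper introduces the scalar $\beta_n=r_n/\|\tfrac{1}{L_f}\nabla f(x_n)-\lambda_n\mathcal{A}(x_n)\|\in(0,1]$, rewrites $y_n=x_n-\beta_n\lambda_n\mathcal{A}(x_n)-\tfrac{1-\beta_n}{L_f}\nabla f(x_n)$, and obtains $f(\bar x)=0$ directly from $y_{n_j}\in\partial\mathfrak{B}(x_{n_j})$, so that in the limit $\mathcal{A}(\bar x)=-\tfrac{1-\beta}{\beta\bar\lambda L_f}\nabla f(\bar x)$ with the sign of $\eta$ coming from $\beta\le 1$ (the outside-the-ball condition) rather than from Slater's condition as you suggest---Slater is used only to guarantee $r_{\bar x}>0$ and hence $\beta>0$.
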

		\begin{proof}
			From the definition of $d_{n}$, we have
			\begin{align} \label{12eq}
				\|d_{n}\|&=\|x_{n}-y_{n}-\lambda_{n}(\mathcal{A}(x_{n})-\mathcal{A}(y_{n}))\|\notag \\
				&\geq \|x_{n}-y_{n}\|-\lambda_{n}\|\mathcal{A}(x_{n})-\mathcal{A}(y_{n})\| \notag \\
				&\geq (1-\mu)\|x_{n}-y_{n}\|.
			\end{align}
			Using Lemmas \ref{lem3.1}, \ref{lem3.2} and \eqref{12eq}  in \eqref{11eq}, we get
			\begin{align} \label{13eq}
				\|x_{n+1}-x^{*}\|^2 &\leq \|x_{n}-x^{*}\|^2-\|x_{n}-x_{n+1}-\gamma \rho_{n}d_{n}\|^2-(2-\gamma)\gamma \rho_{n}^2\|d_{n}\|^2 \notag \\
				&\leq \|x_{n}-x^{*}\|^2-\|x_{n}-x_{n+1}-\gamma \rho_{n}d_{n}\|^2-(2-\gamma)\gamma \frac{(1-\mu)^2}{(1+\mu)^4}\|d_{n}\|^2 \notag \\
				& \leq \|x_{n}-x^{*}\|^2-\|x_{n}-x_{n+1}-\gamma \rho_{n}d_{n}\|^2-(2-\gamma)\gamma \frac{(1-\mu)^4}{(1+\mu)^4}\|x_{n}-y_{n}\|^2.
			\end{align}
			From \eqref{13eq}, we see $\|x_{n+1}-x^{*}\| \leq \|x_{n}-x^{*}\|.$ It implies that the sequence $\{\|x_{n}-x^{*}\|\}$ is decreasing and has a lower bound. Thus, it converges to some finite limit. Moreover, $\{x_{n}\}$ is a Fej\'{e}r monotone sequence with respect to $VI(\mathfrak{C},\mathcal{A})$ and thus is bounded. Now, we have to show that $\lim \limits_{n \to \infty}\|x_{n}-y_{n}\|=0$. From \eqref{13eq}, we see that
			\begin{align} \label{14eq}
				\sum \limits_{n=1}^{N}(2-\gamma)\gamma \frac{(1-\mu)^4}{(1+\mu)^4}\|x_{n}-y_{n}\|^2+\sum \limits_{n=1}^{N}\|x_{n}-x_{n+1}-\gamma \rho_{n}d_{n}\|^2 &\leq \sum \limits_{n=1}^{N}(\|x_{n}-x^{*}\|^2-\|x_{n+1}-x^{*}\|^2) \notag \\
				&=(\|x_{1}-x^{*}\|^2-\|x_{N+1}-x^{*}\|^2).
			\end{align}
			Taking $N \to \infty$ in \eqref{14eq}, we get
			\begin{align*}
				\sum \limits_{n=1}^{\infty}(2-\gamma)\gamma \frac{(1-\mu)^4}{(1+\mu)^4}\|x_{n}-y_{n}\|^2+\sum \limits_{n=1}^{\infty}\|x_{n}-x_{n+1}-\gamma \rho_{n}d_{n}\|^2\leq (\|x_{1}-x^{*}\|^2-L)<\infty,
			\end{align*}
			where $L=\lim \limits_{N \to \infty}\|x_{N+1}-x^{*}\|^2$. Thus, we have $\sum \limits_{n=1}^{\infty}\|x_{n}-y_{n}\|^2<\infty$, which further implies $\lim \limits_{n \to \infty}\|x_{n}-y_{n}\|=0$.
			\\
			Since $\{x_{n}\}$ is bounded, thus there exists a convergent subsequence $\{x_{n_{j}}\}$ such that $\lim \limits_{j \to \infty}x_{n_{j}}=\hat{x}.$
			Moreover, from Remark \ref{remarks1a}(b), we see that $\{\lambda_{n}\}$ has a converging subsequence $\{\lambda_{n_{j}}\}$ such that $\lim \limits_{j \to \infty}\lambda_{n_{j}}=\lambda$.
			Since $\{x_{n_{j}}\} \subseteq \mathfrak{C}$, we have $f(x_{n_{j}}) \leq 0$. From the continuity of $f$, we see that $f(\hat{x}) \leq 0$, thus $\hat{x}\in \mathfrak{C}$. Therefore, we have $r_{\hat{x}}>0$. 
			As $\{x_{n}\}$ is bounded, $\mathcal{A}$, $f$ and $\nabla f$ are continuous, we get that $\{\mathcal{A}(x_{n})\}$, $\{f(x_{n})\}$ and $\nabla f(x_{n})$ are also bounded. By explicitly expressing $y_{n}$, we see that $\{y_{n}\}$ is also bounded. Consider a sequence 
			\[
			\beta_{n} = 
			\begin{cases}
				1, & \text{if } x_{n} - \lambda_{n} A x_{n} \in \mathfrak{B}(x_{n}) \\
				\frac{
					\sqrt{
						\frac{1}{L_{f}^2} \|\nabla f(x_{n})\|^2 - \frac{2}{L_{f}} f(x_{n})
					}
				}{
					\left\| \frac{1}{L_{f}} \nabla f(x_{n}) - \lambda_{n} \mathcal{A}(x_{n}) \right\|
				}, & \text{otherwise}.
			\end{cases}
			\]
			From \eqref{ball1}, it is easy to see that $0<\beta_{n} \leq 1$, thus without loss of generality, we assume that a subsequence $\{\beta_{n_{j}}\}$ of $\{\beta_{n}\}$ converges to $\beta$. From the definition of $\beta_{n}$, we claim that $0<\beta \leq 1$. As
			\begin{align*}
				\beta = \lim \limits_{j \to \infty} \beta_{n_j} 
				= \lim \limits_{j \to \infty} 
				\frac{
					\sqrt{
						\frac{1}{L_g^2} \|\nabla f(x_{n_j})\|^2 - \frac{2}{L_g} f(x_{n_j})
					}
				}{
					\left\| \frac{1}{L_g} \nabla f(x_{n_j}) - \lambda_{n_j} \mathcal{A}(x_{n_j}) \right\|
				} > 0.
			\end{align*}
			Therefore, we have $0<\beta \leq 1$. We now prove that $\hat{x} \in VI(\mathfrak{C},\mathcal{A})$. We discuss the following two cases.\\
			\textbf{Case 1:} There exists infinite terms $\{x_{n_{j}}\}$ such that $x_{n_{j}}-\lambda_{n_{j}}\mathcal{A}(x_{n_{j}}) \in \mathfrak{B}(x_{n_{j}})$. Then, from the explicit expression of $y_{n}$, we have
			\begin{align} \label{14e1}
				y_{n_{j}}=x_{n_{j}}-\lambda_{n_{j}}\mathcal{A}(x_{n_{j}}).
			\end{align}
			Taking $j \to \infty$ on both sides of \eqref{14e1}, we see that $\mathcal{A}(\hat{x})=0$, which further implies that $\hat{x} \in VI(\mathfrak{C},\mathcal{A})$.\\
			\textbf{Case 2:} There exists $n_{0} \in \mathbb{N}$ such that $x_{n_{j}}-\lambda_{n_{j}}\mathcal{A}(x_{n_{j}}) \notin \mathfrak{B}(x_{n_{j}})$, for any $n_{j}>n_{0}$. Now, $y_{n}=P_{\mathfrak{B}(x_{n})}(x_{n}-\lambda_{n}\mathcal{A}(x_{n})) \in \mathfrak{B}(x_{n})$. It implies that $y_{n} \in \partial \mathfrak{B}(x_{n}).$ Therefore, by the definition of the moving ball, we have
			\begin{align} \label{15e1}
				f(x_{n_{j}})+\langle \nabla f(x_{n_{j}}),y_{n_{j}}-x_{n_{j}}\rangle+\frac{L_{f}}{2}\|y_{n_{j}}-x_{n_{j}}\|^2=0.
			\end{align}
			Taking $j \to \infty$ on both sides of \eqref{15e1}, we get $f(\hat{x})=0$, which further implies $\hat{x} \in \partial \mathfrak{C}$. Using the explicit form of $y_{n}$, we see that
			\begin{align} \label{16e1}
				y_{n_{j}}=x_{n_{j}}-\beta_{n_{j}}\lambda_{n_{j}}\mathcal{A}(x_{n_{j}})-\frac{(1-\beta_{n_{j}})}{L_{f}}\nabla f(x_{n_{j}}).
			\end{align}
			Taking $j \to \infty$ in \eqref{16e1}, we see that
			\begin{align*}
				\beta \lambda \mathcal{A}(\hat{x})+\frac{(1-\beta)}{L_{f}}\nabla f(\hat{x})=0.
			\end{align*}
			Therefore, we obtain 
			\begin{align*}
				\mathcal{A}(\hat{x})=\frac{-(1-\beta)}{\beta \lambda L_{f}}\nabla f(\hat{x}).
			\end{align*}
			If $\beta=1$, then we have $\mathcal{A}(\hat{x})=0$, which implies that $\hat{x} \in VI(\mathfrak{C},\mathcal{A})$. If $0<\beta <1$, then we have $\frac{(1-\beta)}{\beta \lambda L_{f}}>0$, thus from Lemma \ref{2.5lem}, we conclude that $\hat{x} \in VI(\mathfrak{C},\mathcal{A})$. 
			Finally, we establish that the sequence $\{x_{n}\}$ converges to $\hat{x}$. We have already demonstrated that the sequence $\{\|x_{n}-\hat{x}\|\}$ is monotonically decreasing and convergent. Hence
			\begin{align*}
				\lim \limits_{n \to \infty}\|x_{n}-\hat{x}\|=\lim \limits_{j \to \infty}\|x_{n_{j}}-\hat{x}\|=0,
			\end{align*}
			which means $\{x_{n}\}$ converges to $\hat{x}$. Hence the result.
			
		\end{proof}

		\section{Numerical Experiments}
		This section provides some numerical experiments to validate our main result. We compare our algorithm with well-established algorithms of Cao and Guo \cite{4}, Feng et al. \cite{7} and Zhang et al. \cite{6}. 
		
		We use $E_{n}=\|x_{n}-y_{n}\|$ to calculate the $n$-th iteration error. The convergence of $E_{n} \to 0$ implies that the sequence $\{x_{n}\}$ converges to the solution of the variational inequality problem. The parameters associated with the different algorithms are given as follows:
		\begin{itemize}
			\item \textbf{Algorithm 3.1}: $\mu=0.01$, $\delta=0.0005$, $\sigma=7$, and $\gamma=0.99$.
			\item \textbf{Cao and Guo}: $\tau=0.0018$, and $\alpha_{n}=\frac{1}{4n+1}$.
			\item \textbf{Feng et al.}: $0< \tau < \frac{1}{L_{\mathcal{A}}}$
			\item \textbf{Zhang et al.}: $0 < \tau_{1} \leq \frac{1}{4L_{\mathcal{A}}}$
		\end{itemize}
		\begin{example} \label{example1}
			Consider the mapping $\mathcal{A}:\mathbb{R}^{4} \to \mathbb{R}^{4}$ defined as
			\begin{align*}
				\mathcal{A}(x_{1},x_{2},x_{3},x_{4})=\begin{bmatrix}
					3x_{1}^2+2x_{1}x_{2}+2x_{2}^2+x_{3}+3x_{4}-6 \\
					2x_{1}^2+x_{1}+x_{2}^2+10x_{3}+2x_{4}-2 \\
					3x_{1}^2+x_{1}x_{2}+2x_{2}^2+2x_{3}+9x_{4}-9 \\
					x_{1}^2+3x_{2}^2+2x_{3}+3x_{4}-3 
				\end{bmatrix} .
			\end{align*}
			The feasible set $\mathfrak{C} \subseteq \mathbb{R}^{4}$ is an ellipsoid defined as 
			\begin{align*}
				\mathfrak{C}=\{x \in \mathbb{R}^4: (x-t)'T(x-t) \leq u^2\},
			\end{align*}
			where $T$ is a positive semidefinite matrix, $t\in \mathbb{R}^4$ and $u>0$. It is known that $\mathcal{A}$ is pseudo-monotone and Lipschitz continuous \cite{12}. Define $f: \mathbb{R}^4 \to \mathbb{R}$ by 
			\begin{align*}
				f(x)=\frac{1}{2}\big[(x-t)'T(x-t)-u^2\big].
			\end{align*}
			Then $\mathfrak{C}$ can be rewritten as $\mathfrak{C}=\{x \in \mathbb{R}^4: f(x) \leq 0\}$. The gradient of $f(x)$ is $\nabla f(x)=T(x-t)$, for all $x \in \mathbb{R}^{4}$. Moreover
			\begin{align*}
				\|\nabla f(x)-\nabla f(y)\|=\|T(x-t)-T(y-t)\| \leq \|T\|\|x-y\|.
			\end{align*}
			It implies that $\nabla f$ is Lipschitz continuous with constant $\|T\|$. Furthermore, Slater's condition holds for $\mathfrak{C}$, i.e. $f(t)=-\frac{1}{2}u^2<0$. Since $\mathfrak{C}$ is an ellipsoid, computing its projections is not straightforward. So, we project onto a moving ball such that the projections always remain under the feasible set $\mathfrak{C}$. For our experiment, we randomly generate the positive semidefinite matrix $T$, vector $t$, and $u$. We choose the starting point as $x_{1}=t$. We include a stopping criterion given by $E_{n} \leq 10^{-15}$. The consequences of our algorithm can be seen in Figure \ref{ex1}. 
			
			\vspace{1.1cm}
			\begin{figure}[ht!]
				\centering
				\includegraphics[scale=0.5,trim=40mm 40mm 40mm 30mm]{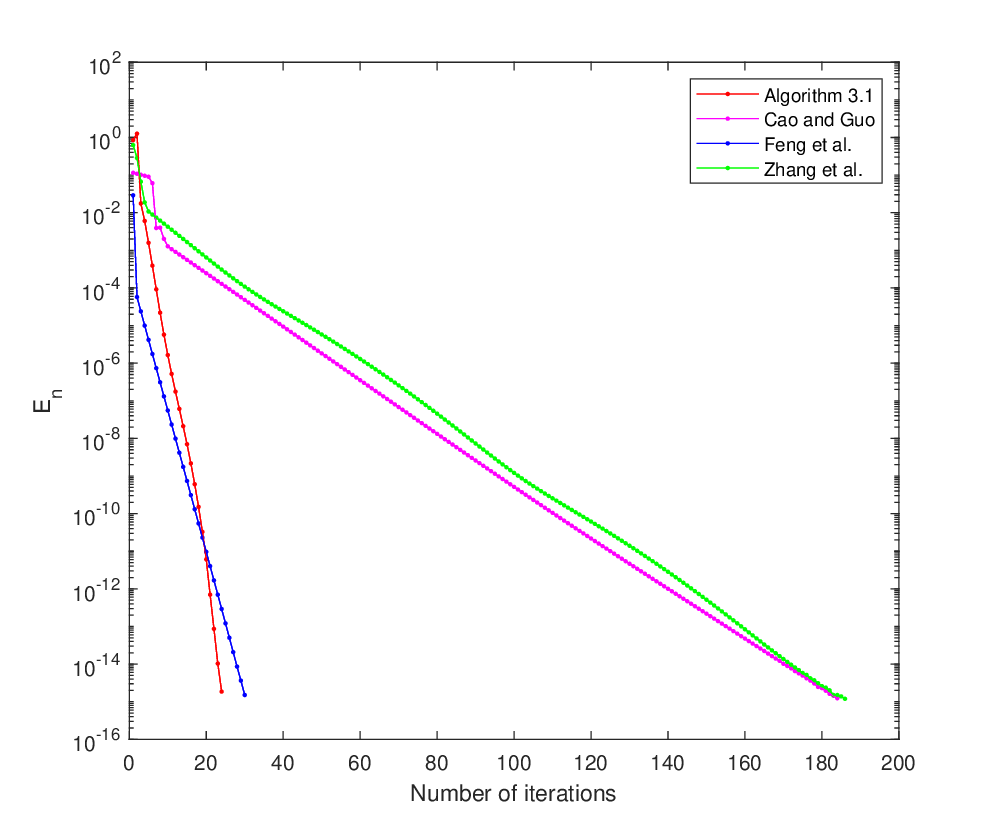}  
				\vspace{0.6in}
				\caption{Comparison of algorithms for Example \ref{example1} }
				\label{ex1}
			\end{figure}
			\vspace{0.5cm}
		\end{example}

		\begin{example} \label{example2}
			Consider the mapping $\mathcal{A}: \mathbb{R}^{n} \to \mathbb{R}^{n}$ defined as
			\begin{align*}
				\mathcal{A}(x)=G(x)+Mx+e,
			\end{align*}
			where $G(x)_{i}=\arctan x_{i}, ~i=\{1,2,...,n\}$,
			\begin{align*}
				M=
				\begin{bmatrix}
					4 & -2 & 0 & \cdots & 0 & 0 \\
					1 & 4 & -2 & \cdots & 0 & 0 \\
					0 & 1 & 4 & \cdots & 0 & 0 \\
					\vdots & \vdots & \vdots & \ddots & \vdots & \vdots \\
					0 & 0 & 0 & \cdots & 4 & -2 \\
					0 & 0 & 0 & \cdots & 1 & 4 \\
				\end{bmatrix},
			\end{align*}
			and $e=(-1,-1,-1,...,-1)'$. The feasible set $\mathfrak{C}$ is defined as $\mathfrak{C}=\{x \in \mathbb{R}^{n}: (x-t)'T(x-t) \leq u^2\}$. Since $\mathcal{A}$ is pseudo-monotone and Lipschitz continuous, we apply our algorithm to find the solution. We randomly generate the positive semidefinite matrix $T$, vector $t$, and $u$. The starting point $x_{1}$ is considered to be $t$. We include a stopping criterion given by $E_{n} \leq 10^{-10}$. The consequences of our algorithm can be seen in Figure \ref{ex2}.
			\vspace{1.1cm}
			\begin{figure}[ht!]
				\centering
				\includegraphics[scale=0.5,trim=40mm 40mm 40mm 30mm]{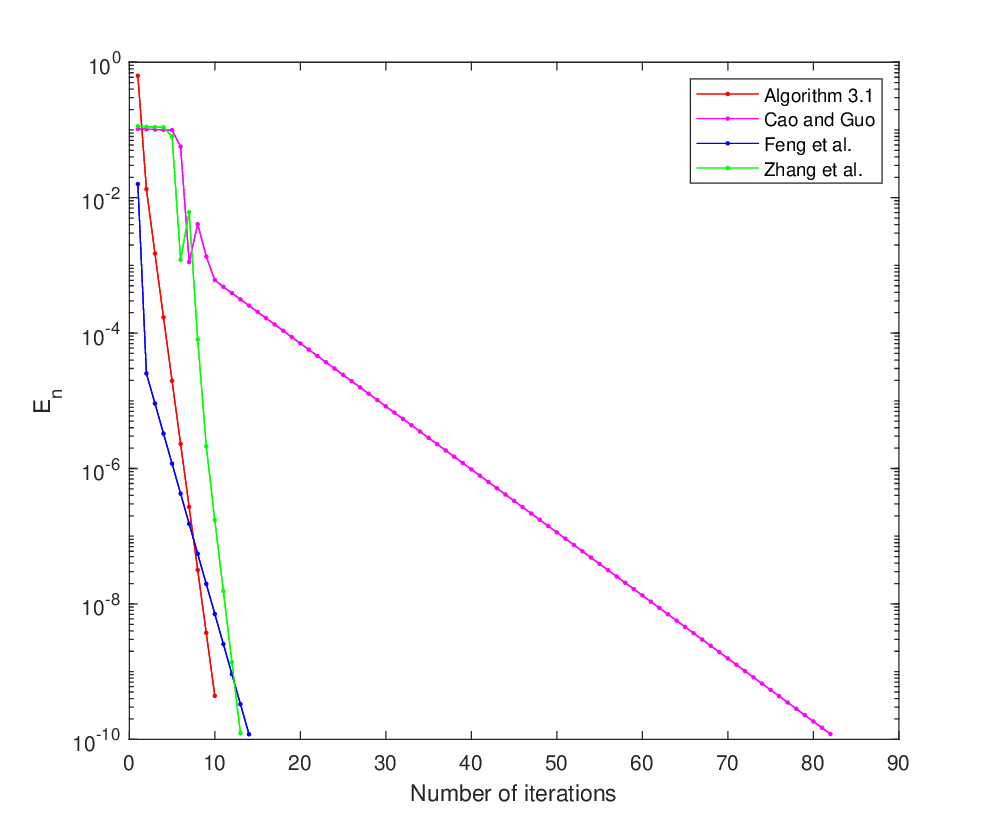}  
				\vspace{0.6in}
				\caption{Comparison of algorithms for Example \ref{example2} when $n=1000$ }
				\label{ex2}
			\end{figure}
			\vspace{0.5cm}

		\end{example}
		
		\section{Conclusion}
		In this paper, we used a moving ball algorithm along with a line search rule to solve the variational inequality problem. Taking inspiration from the extragradient method and the projection-contraction method, we proved a convergence result for our algorithm that is easy to implement in cases where the structure of the convex set (feasible set) is complicated. Since the projection onto the ball has an explicit expression, the applicability of our algorithm is quite easy. Moreover, our algorithm does not need any additional information on the pseudo-monotone cost operator other than being Lipschitz continuous. This means that we do not need to prove Condition \textbf{(T)} on the cost operator. We discussed the effectiveness of our algorithm over other well-established algorithms. It has been shown that our algorithm performs better compared to earlier established algorithms. For future work, we try to generalize the concept in the setting of Banach spaces using some inertial techniques.
		\section*{Declarations}
		
		\textbf{Ethical Approval.} Not Applicable as no both human and/ or animal studies. 
		
		\textbf{Data Availability.} No underlying data was collected or produced in this study.

		\textbf{Competing interests.}
		The authors declare that there is no competing interest in the publication of this paper.
		
		\textbf{Funding.} There is no funding.
		
		\textbf{Authors' contributions.} All authors contribute equally.
		
		\textbf{Acknowledgements} 
		The first author is grateful to CSIR, New Delhi, India, for providing a senior research fellowship (File 09/0677(13166)/2022-EMR-I).

	\end{document}